\documentclass{article}
\usepackage[utf8]{inputenc}
\usepackage{authblk}
\usepackage{setspace}
\usepackage[margin=1in]{geometry}
\usepackage{graphicx}
\graphicspath{ {./figures/} }
\usepackage{subcaption}
\usepackage{amsmath}
\usepackage{lineno}
\usepackage{comment}
\usepackage{amsthm}

\newtheorem{thm}{Theorem}[section]

\usepackage{amsfonts} 
\usepackage{mathtools} 

\usepackage{url}
\usepackage{hyperref}
\setcounter{tocdepth}{2}

\usepackage{xcolor}
\usepackage{float}

\hypersetup{
    colorlinks,
    linkcolor={red!50!black},
    citecolor={blue!50!black},
    urlcolor={blue!80!black}
}
\usepackage[natbib=true,backend=biber,sorting=none,style=ieee, citestyle=numeric-comp]{biblatex}

\addbibresource{reference.bib}

\title{Time-Optimal Control Studies for Additional Food provided Prey-Predator Systems involving Holling Type-III and Holling Type-IV Functional Responses}

\author[1]{D Bhanu Prakash}
\author[2]{D K K Vamsi}

\affil[1, 2]{ \ Department of Mathematics and Computer Science, Sri Sathya Sai Institute of Higher Learning, India.}
\affil[1]{First Author. Email: dbhanuprakash@sssihl.edu.in}
\affil[2]{Corresponding author. Email: dkkvamsi@sssihl.edu.in}

\date{}

\doublespacing

\begin{document}

\maketitle

\begin{abstract} {{
\noindent In recent years, time-optimal control studies on additional food provided prey-predator systems have gained significant attention from researchers in the field of mathematical biology. In this study, we initially consider an additional food provided prey-predator model exhibiting Holling type-III functional response and the intra-specific competition among predators. We prove the existence and uniqueness of global positive solutions for the proposed model. We do the time optimal control studies with respect quality and quantity of additional food as control variables by transforming the independent variable in the control system. Making use of the Pontraygin maximum principle, we characterize the optimal quality of additional food and optimal quantity of additional food. We show that the findings of these time-optimal control studies on additional food provided prey-predator systems involving Holling type III functional response have the potential to be applied to a variety of problems in pest management. In the later half of this study, we consider an additional food provided prey-predator model exhibiting Holling type-IV functional response and study the above aspects for this system.
}}
\end{abstract}

{ \bf {keywords:} } Time-optimal control; Pontraygin Maximum Principle; Holling type-III response; Holling type-IV response; Pest management;

{ \bf {MSC 2020 codes:} } 37A50; 60H10; 60J65; 60J70;


\section{Introduction} \label{Intro}

\qquad Prey-predator models are mathematical representations used to study the intricate dynamics between two interacting species: the prey and its predator. These models aim to understand how changes in the population sizes of both species influence each other over time. By examining the fluctuations and stability of both populations, these models contribute to our understanding of ecological balance, the impact of external factors on species coexistence, and the potential consequences of perturbations in natural communities. Prey-predator models play a critical role in guiding conservation efforts, studying invasive species' effects, and comprehending the intricate web of life in ecosystems worldwide. 

One of the fundamental components in prey-predator models is the functional response which describes how the predator's consumption rate changes in response to variations in prey density \cite{kot2001elements}. The very first few proposed functional responses are the Holling functional responses, proposed by Canadian ecologist C.S. Holling in the 1950s \cite{metz2014dynamics}. The Holling type-III and Holling type-IV functional responses are widely observed in real life situations. The Holling type-III functional response exhibits a slow increase in the predator's consumption rate at low prey densities, followed by a rapid rise as prey density reaches a certain threshold. Whereas, the Holling type-IV functional response exhibits a saturation effect, meaning that the rate of prey consumption by predators increases at a decreasing rate as prey density increases. This saturation effect aligns with empirical observations that predators have limited capacity and cannot consume an unlimited number of prey items. 

The concept of providing additional food to predator reflects a more realistic scenario where predators may have access to alternative food sources, such as other prey species or external resources. Understanding the role of additional food in prey-predator models is crucial for comprehending the complexity of ecological systems and the various factors that influence species coexistence and ecosystem stability. Authors in \cite{v1,v2,V3} studied the additional food provided prey-predator systems involving Holling type-III and Holling type-IV functional responses. The time-optimal control problems for additional food provided prey-predator systems involving Holling type-III and Holling type-IV functional responses are studied in \cite{ananth2021influence,ananth2022achieving,ananthcmb}. Recently, the stochastic time-optimal control problems for additional food provided prey-predator systems involving Holling type-III and Holling type-IV functional responses are studied in \cite{prakash2023stochastic,prakash9stochastic}. However, to the best of our knowledge, no work is available on stochastic prey-predator models exhibiting mutual interference among predators. This work is the first of its kind in this regard. In this work, we derive the stochastic models and perform the stochastic time-optimal control studies on additional food provided prey-predator systems involving Holling type-III and Holling type-IV functional responses among mutually interfering predators.

The article is structured as follows: In the first part of the work, we derive model and present time-optimal control studies on additional food provided prey-predator system involving Holling type-III functional response and intra specific competition among predators. In the second part of the work, we derive model and present time-optimal control studies on additional food provided prey-predator system involving Holling type-IV functional response and intra specific competition among predators. In section \ref{model1}, we derive the deterministic model of additional food provided prey-predator system involving Holling type-III functional response and prove the existence of global positive solution for this model. The time-optimal control problem is formulated and the optimal quality and quantity of additional food is characterized in section \ref{control1}. In the latter part of the work, we derive the deterministic model of additional food provided prey-predator system involving Holling type-IV functional response and prove the existence of global positive solution for this model in section \ref{model2}. The time-optimal control problem is formulated and the optimal quality and quantity of additional food is characterized in section \ref{control2}. Finally, we present the discussions and conclusions in section \ref{disc}.


\section{Model Formulation} \label{model1}

\hspace{0.1in}  In this section, we consider the following deterministic additional food provided prey-predator model exhibiting Holling type-III functional response and intra-specific competition among predators. The model is given by: 

\begin{equation} \label{3det0}
\begin{split}
\frac{\mathrm{d} N(t)}{\mathrm{d} t} & = r N(t) \Bigg(1-\frac{N(t)}{K} \Bigg)-\frac{c N^2(t) P(t)}{a^2+N^2(t)+\alpha \eta  A^2} \\
\frac{\mathrm{d} P(t)}{\mathrm{d} t} & = g \Bigg( \frac{N^2(t) + \eta A^2}{a^2+N^2(t)+\alpha \eta A^2} \Bigg) P(t) - m P(t) - d P^2(t)
\end{split}
\end{equation}

The biological meaning for all the parameters involved in the system (\ref{3det0}) are enlisted and described in Table \ref{param_tab}. The detailed derivation of the functional response and the model can be found in \cite{v2}. In order to reduce the complexity, we now reduce the number of parameters in the model (\ref{3det0})  by introducing the transformations $N = a x, P=\frac{a y}{c}$. Then the system (\ref{3det0}) gets transformed to:

\begin{equation} \label{3detisc}
\begin{split}
\frac{\mathrm{d} x(t)}{\mathrm{d} t} & = r x(t) \Bigg( 1 - \frac{x(t)}{\gamma} \Bigg) - \frac{x^2(t) y(t)}{1 + x^2(t) + \alpha \xi } \\
\frac{\mathrm{d} y(t)}{\mathrm{d} t} & = g y(t) \Bigg( \frac{x^2(t) + \xi}{1+x^2(t)+\alpha \xi} \Bigg) - m y(t) - \delta y^2(t)
\end{split}
\end{equation}
where $ \gamma = \frac{K}{a}, \xi = \eta (\frac{A}{a})^2, \delta = \frac{d a}{c}$. 

\begin{table}[bht!]
    \centering
    \caption{Description of variables and parameters present in the systems (\ref{3det0}) and (\ref{3detisc})}

    \begin{tabular}{ccc}
        \hline
        Parameter & Definition & Dimension \\  
        \hline
        T & Time & time\\ 
        N & Prey density & biomass \\
        P & Predator density & biomass \\
        A & Additional food & biomass \\
        r & Prey intrinsic growth rate & time$^{-1}$ \\
        K & Prey carrying capacity & biomass \\
        c & Rate of predation & time$^{-1}$ \\
        a & Half Saturation value of the predators & biomass \\
        g & Conversion efficiency & time$^{-1}$ \\
        m & death rate of predators in absence of prey & time$^{-1}$ \\
        d & Predator Intra-specific competition & biomass$^{-1}$ time$^{-1}$ \\
        $\alpha$ & quality of additional food & Dimensionless \\
        $\xi$ & quantity of additional food & biomass$^{2}$ \\
        \hline
    \end{tabular}
    \label{param_tab}
\end{table}

\begin{thm}(Existence and Uniqueness of solutions of (\ref{3detisc}))\label{3th1}
The interior of the positive quadrant of the state space is invariant and all the solutions of the system (\ref{3detisc}) initiating in the interior of the positive quadrant are  asymptotically bounded in the region $B$ defined by \\ $ \textbf{B} = \left\{ (x,y) \in \mathbb{R}^{2}_+ : 0\leq x\leq \gamma ,\; 0\leq x + \frac{1}{\beta}y \leq M \right\}$,  where $M = \frac{\gamma (r + \eta)^2}{4 r} + \frac{g (\frac{\xi}{1+\alpha \xi} + \frac{\eta - m}{g})^2}{4 \delta \eta}$ and $\eta > 0$ sufficiently small .
\end{thm}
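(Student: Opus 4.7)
The plan is to argue in three stages: invariance of the open positive quadrant, a pointwise bound on the prey component $x(t)$, and a bound on a suitable linear combination $W(t)=x(t)+\frac{1}{\beta}y(t)$ obtained from a Gronwall-type differential inequality. Since the vector field of \eqref{3detisc} is locally Lipschitz on $\mathbb{R}^2$, local existence and uniqueness come for free; the only real issues are positivity and a priori bounds.

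For invariance, the right-hand side of the $x$-equation has $x$ as a common factor and the right-hand side of the $y$-equation has $y$ as a common factor, so both coordinate axes are flow-invariant. Uniqueness of ODE solutions then forces any trajectory starting in the open positive quadrant to remain there for as long as it exists, giving $x(t),y(t)>0$. The bound on $x$ follows immediately by dropping the nonnegative predation term to get $\dot x\le r x(1-x/\gamma)$ and comparing with the logistic ODE: $x(t)\le\max\{x(0),\gamma\}$ and in particular $\limsup_{t\to\infty}x(t)\le\gamma$.

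The main step is the bound on $W$. I would take $\beta=g$, because this choice makes the Holling type-III predation term in $\dot x$ cancel exactly against the corresponding term in $(1/g)\dot y$. A direct computation then gives
\[
\dot W+\eta W \;=\; x\!\left(r+\eta-\frac{r x}{\gamma}\right)+y\!\left(\frac{\xi}{1+x^2+\alpha\xi}+\frac{\eta-m}{g}\right)-\frac{\delta}{g}y^2,
\]
and replacing $\frac{\xi}{1+x^2+\alpha\xi}$ by its supremum $\frac{\xi}{1+\alpha\xi}$ decouples the right-hand side into a quadratic in $x$ plus a quadratic in $y$. Completing the square on each produces exactly the two summands appearing in $M$, so $\dot W+\eta W$ is bounded above by a finite constant; a standard Gronwall comparison with the scalar linear equation $\dot u+\eta u=\mathrm{const.}$ then gives $\limsup_{t\to\infty} W(t)\le M$, which, together with $x\le\max\{x(0),\gamma\}$, shows that $\textbf{B}$ is an absorbing region. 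Note that the intra-specific competition term $-\delta y^2$ is indispensable here, since it supplies the dissipation $-\tfrac{\delta}{g}y^2$ that caps the $y$-quadratic; without it, no bound of this type can be obtained.

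I expect the only technical point to be the interplay between $\eta>0$ and the linear coefficients in the two quadratics: the hypothesis that $\eta$ be sufficiently small is what allows the completed-square manipulation to produce the stated closed form of $M$ while keeping the quadratic in $x$ centered at a meaningful maximizer. The cancellation of the predation term via the choice $\beta=g$, and the concluding Gronwall step, are themselves routine.
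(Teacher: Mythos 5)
Your proposal is correct and takes essentially the same route as the paper: the paper likewise sets $V = x + \tfrac{1}{g}y$, bounds $\tfrac{\xi y}{1+x^2+\alpha\xi} \le \tfrac{\xi y}{1+\alpha\xi}$, applies $Ax - Bx^2 \le \tfrac{A^2}{4B}$ (your completed squares) to get $\dot V + \eta V \le M'$, and concludes $V \le M'/\eta$ by the comparison theorem. Your explicit handling of axis invariance and the logistic bound $x \le \gamma$ merely fills in details the paper asserts in the theorem but does not spell out in its proof.
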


\begin{proof}
    Let us define $V = x + \frac{1}{g}y$. Now for any $\eta >0$, consider the ordinary differential equation 
	\begin{eqnarray*}
		\dfrac{d V}{dt} + \eta V &=& r x \Bigg( 1 - \frac{x}{\gamma} \Bigg) - \frac{x^2 y}{1 + x^2 + \alpha \xi } + y \Bigg( \frac{x^2 + \xi}{1+x^2+\alpha \xi} \Bigg) - \frac{m}{g} y - \frac{\delta}{g} y^2 + \eta x + \frac{\eta}{g}y\\
		&=& (r + \eta) x - \frac{r}{\gamma} x^2 + \frac{\xi y}{1+x^2+\alpha \xi} + \frac{\eta - m}{g} y - \frac{\delta}{g} y^2
    \end{eqnarray*}
    Since $x^2 \geq 0$, $ \frac{\xi y}{1+x^2+\alpha \xi} \leq \frac{\xi y}{1+\alpha \xi}$
    \begin{eqnarray*}
        \dfrac{d V}{dt} + \eta V &\leq& (r + \eta) x - \frac{r}{\gamma} x^2 + \Big(\frac{\xi}{1+\alpha \xi} + \frac{\eta - m}{g}\Big) y - \frac{\delta}{g} y^2
    \end{eqnarray*}
    Since $A x - B x^2 \leq \frac{A^2}{4 B}$, by choosing $\eta$ sufficiently small ($\eta \ll \delta$), we get the relation
    $$\dfrac{d V}{dt} + \eta V \leq  \frac{\gamma (r + \eta)^2}{4 r} + \frac{g (\frac{\xi}{1+\alpha \xi} + \frac{\eta - m}{g})^2}{4 \delta}$$
    Now, by choosing $M' = \frac{\gamma (r + \eta)^2}{4 r} + \frac{g (\frac{\xi}{1+\alpha \xi} + \frac{\eta - m}{g})^2}{4 \delta}$, we get
    \begin{equation} \label{ineq}
	    	\dfrac{d V}{dt} + \eta V \leq  M'
    \end{equation}

    Considering the differential equations $\dfrac{d V}{dt} + \eta V = 0$ and $\dfrac{d V}{dt} + \eta V = M'$ and  using the Comparison theorem \cite{birkhoff1962ordinary} for solutions of differential equations and  the inequality (\ref{ineq}), we get the relation
    \begin{equation}
	    0 < V(t) < \frac{M'}{\eta} (1 - \exp(-\eta t)) + \omega(0) \exp(-\eta t) \label{ineq2}
    \end{equation}

    Now, from (\ref{ineq2}) we see that as $t$ becomes large enough, we get $0 \leq V(t) \leq \frac{M'}{\eta}$. Let $M = \frac{M'}{\eta}$. Then, $0 \leq V(t) \leq M$. Hence, the solutions  of the additional food provided system (\ref{3detisc}) with positive initial conditions $(x_0,y_0)$ will be within the set \textbf{B}.
\end{proof}

\section{Time-Optimal Control Studies for Holling type-III Systems} \label{control1}

In this section, we formulate and characterise two time-optimal control problems with quality of additional food and quantity of additional food as control parameters respectively. We shall drive the system \ref{3detisc} from the initial state $(x_0,y_0)$ to the final state $(\bar{x},\bar{y})$ in minimum time.

\subsection{Quality of Additional Food as Control Parameter}

We assume that the quantity of additional food $(\xi)$ is constant and the quality of additional food varies in $[\alpha_{\text{min}},\alpha_{\text{max}}]$. The time-optimal control problem with additional food provided prey-predator system involving Holling type-III functional response and intra-specific competition among predators (\ref{3detisc}) with quality of additional food ($\alpha$) as control parameter is given by

\begin{equation}
	\begin{rcases}
	& \displaystyle {\bf{\min_{\alpha_{\min} \leq \alpha(t) \leq \alpha_{\max}} T}} \\
	& \text{subject to:} \\
    & \dot{x}(t) = r x(t) \Bigg( 1 - \frac{x(t)}{\gamma} \Bigg) - \frac{x^2(t) y(t)}{1 + x^2(t) + \alpha \xi } \\
    & \dot{y}(t) = g y(t) \Bigg( \frac{x^2(t) + \xi}{1+x^2(t)+\alpha \xi} \Bigg) - m y(t) - \delta y^2(t) \\
	& (x(0),y(0)) = (x_0,y_0) \ \text{and} \ (x(T),y(T)) = (\bar{x},\bar{y}).
	\end{rcases}
	\label{3alpha0}
\end{equation}

This problem can be solved using a transformation on the independent variable $t$ by introducing an independent variable $s$ such that $\mathrm{d}t = (1 + \alpha \xi + x^2) \mathrm{d}s$. This transformation converts the time-optimal control problem $\ref{3alpha0}$ into the following linear problem.

\begin{equation}
	\begin{rcases}
	& \displaystyle {\bf{\min_{\alpha_{\min} \leq \alpha(t) \leq \alpha_{\max}} S}} \\
	& \text{subject to:} \\
    & \dot{x}(s) = r x (1 - \frac{x}{\gamma}) (1 + x^2 + \alpha \xi) - x^2 y \\
    & \dot{y}(s) = g (x^2 + \xi) y - (1 + x^2 + \alpha \xi) (m y + \delta y^2) \\
	& (x(0),y(0)) = (x_0,y_0) \ \text{and} \ (x(S),y(S)) = (\bar{x},\bar{y}).
	\end{rcases}
	\label{3alpha}
\end{equation}

Hamiltonian function for this problem (\ref{3alpha}) is given by
\begin{equation*}
\begin{split}
    \mathbb{H}(s,x,y,p,q) &= p \Big[r x (1 - \frac{x}{\gamma}) (1 + x^2 + \alpha \xi) - x^2 y\Big] + q \Big[g (x^2 + \xi) y - (1 + x^2 + \alpha \xi) (m y + \delta y^2)\Big] \\
    &= \Big[ p r x (1 - \frac{x}{\gamma}) \xi - q \xi (m y + \delta y^2) \Big] \alpha \\ &     + \Big[ p (r x (1 - \frac{x}{\gamma}) (1 + x^2) - x^2 y) + q (g (x^2 + \xi) y - (1 + x^2) (m y + \delta y^2))\Big] \\
\end{split}
\end{equation*}

Here, $p$ and $q$ are costate variables satisfying the adjoint equations 

\begin{equation*}
\begin{split}
\dot{p} & = -p \Big[2 r x^2 (1 - \frac{x}{\gamma}) - \frac{r x (1 + x^2 + \alpha \xi)}{\gamma} + r (1 - \frac{x}{\gamma}) (1 + x^2 + \alpha \xi) - 2 x y \Big] -  q \Big[2 g x y - 2 x (m y + \delta y^2)\Big] \\
\dot{q} & = p x^2 - q \Big[ g (x^2 + \xi) - (1 + x^2 + \alpha \xi) (m + 2 \delta y) \Big]
\end{split}
\end{equation*}

Since Hamiltonian is a linear function in $\alpha$, the optimal control can be a combination of bang-bang and singular controls \cite{cesari2012optimization}. Since we are minimizing the Hamiltonian, the optimal strategy is given by 

\begin{equation}
	\alpha^*(t) =
	\begin{cases}
	    \alpha_{\max}, &\text{ if } \frac{\partial \mathbb{H}}{\partial \alpha} < 0\\
	    \alpha_{\min}, &\text{ if } \frac{\partial \mathbb{H}}{\partial \alpha} > 0
	\end{cases}
\end{equation}
where
\begin{equation}
    \frac{\partial \mathbb{H}}{\partial \alpha} = p r x \Big(1 - \frac{x}{\gamma}\Big) \xi - q \xi (m y + \delta y^2)
\end{equation}

This problem \ref{3alpha} admits a singular solution if there exists an interval $[s_1,s_2]$ on which $\frac{\partial \mathbb{H}}{\partial \alpha} = 0$. Therefore, 

\begin{equation}
    \frac{\partial \mathbb{H}}{\partial \alpha} = p r x \Big(1 - \frac{x}{\gamma}\Big) \xi - q \xi (m y + \delta y^2) = 0 \textit{ i.e. } \frac{p}{q} = \frac{\gamma (m y + \delta y^2)}{r x (x -\gamma)} \label{3apbyq1}
\end{equation}

Differentiating $\frac{\partial \mathbb{H}}{\partial \alpha}$ with respect to $s$ we obtain 
\begin{equation*}
\begin{split}    
    \frac{\mathrm{d}}{\mathrm{d}s} \frac{\partial \mathbb{H}}{\partial \alpha} =&\frac{\mathrm{d}}{\mathrm{d}s} \Big[ p r x \Big(1 - \frac{x}{\gamma}\Big) \xi - q \xi (m y + \delta y^2)\Big] \\
     =& r \xi x (1-\frac{x}{\gamma}) \dot{p} + p r \xi (1 - \frac{2 x}{\gamma}) \dot{x} - \xi (m y + \delta y^2) \dot{q} - q \xi (m +2 \delta y) \dot{y}
\end{split}
\end{equation*}

Substituting the values of $\dot{x}, \dot{y}, \dot{p}, \dot{q}$ in the above equation, we obtain

\begin{equation*}
\begin{split}
    \frac{\mathrm{d}}{\mathrm{d}s} \frac{\partial \mathbb{H}}{\partial \alpha} =& p r \Big(1 - \frac{2 x}{\gamma}\Big) \xi \Big(r x \Big(1 - \frac{x}{\gamma}\Big) (1 + x^2 + \alpha \xi) - x^2 y\Big)  - q \xi (m + 2 \delta y) \Big(g (x^2 + \xi) y \\ & - (1 + x^2 + \alpha \xi) (m y + \delta y^2)\Big)  - \xi (m y + \delta y^2) (p x^2 - q (g (x^2 + \xi) - (1 + x^2 + \alpha \xi) (m + 2 \delta y))) + \\ & r x \Big(1 - \frac{x}{\gamma}\Big) \xi \Bigg(-p \Big(2 r x^2 (1 - \frac{x}{\gamma}) - \frac{r x (1 + x^2 + \alpha \xi)}{\gamma} + \\ & r (1 - \frac{x}{\gamma}) (1 + x^2 + \alpha \xi) - 2 x y\Big) - q (2 g x y - 2 x (m y + \delta y^2))\Bigg) 
\end{split}
\end{equation*}

Along the singular arc, $\frac{\mathrm{d}}{\mathrm{d}s} \frac{\partial \mathbb{H}}{\partial \alpha} = 0$. This implies that 

\begin{equation*}
\begin{split}
     p r \Big(1 - \frac{2 x}{\gamma}\Big) \xi \Big(r x \Big(1 - \frac{x}{\gamma}\Big) (1 + x^2 + \alpha \xi) - x^2 y\Big)  - q \xi (m + 2 \delta y) (g (x^2 + \xi) y - (1 + x^2 + \alpha \xi) (m y + \delta y^2)) & \\ - \xi (m y + \delta y^2) (p x^2 - q (g (x^2 + \xi) - (1 + x^2 + \alpha \xi) (m + 2 \delta y))) + & \\ r x \Big(1 - \frac{x}{\gamma}\Big) \xi \Bigg(-p \Big(2 r x^2 (1 - \frac{x}{\gamma}) - \frac{r x (1 + x^2 + \alpha \xi)}{\gamma} + & \\ r (1 - \frac{x}{\gamma}) (1 + x^2 + \alpha \xi) - 2 x y\Big) - q (2 g x y - 2 x (m y + \delta y^2))\Bigg) & = 0
\end{split}
\end{equation*}

and that 
\begin{equation} \label{3apbyq2}
    \frac{p}{q} = \frac{\gamma y (2 g r x^2 (-\gamma + x) - \delta g \gamma (x^2 + \xi) y + 2 r (\gamma - x) x^2 (m + \delta y))}{x^2 (2 r^2 (\gamma - x)^2 x + \gamma^2 (m - r) y + \delta \gamma^2 y^2)}
\end{equation}

From \ref{3apbyq1} and \ref{3apbyq2}, we have $ \gamma^2 x y (m + \delta y) (m - r + \delta y) + g r (\gamma - x) (2 r (\gamma - x) x^2 + \delta \gamma (x^2 + \xi) y) = 0$. The solutions of this cubic equation gives the switching points of the bang-bang control.

\subsection{Quantity of Additional Food as Control Parameter}

In this section, We assume that the quality of additional food $(\alpha)$ is constant and the quantity of additional food varies in $[\xi_{\text{min}},\xi_{\text{max}}]$. The time-optimal control problem with additional food provided prey-predator system involving Holling type-III functional response and intra-specific competition among predators (\ref{3detisc}) with quantity of additional food ($\xi$) as control parameter is given by

\begin{equation}
	\begin{rcases}
	& \displaystyle {\bf{\min_{\xi_{\min} \leq \xi(t) \leq \xi_{\max}} T}} \\
	& \text{subject to:} \\
    & \dot{x}(t) = r x(t) \Bigg( 1 - \frac{x(t)}{\gamma} \Bigg) - \frac{x^2(t) y(t)}{1 + x^2(t) + \alpha \xi } \\
    & \dot{y}(t) = g y(t) \Bigg( \frac{x^2(t) + \xi}{1+x^2(t)+\alpha \xi} \Bigg) - m y(t) - \delta y^2(t) \\
	& (x(0),y(0)) = (x_0,y_0) \ \text{and} \ (x(T),y(T)) = (\bar{x},\bar{y}).
	\end{rcases}
	\label{3xi0}
\end{equation}

This problem can be solved using a transformation on the independent variable $t$ by introducing an independent variable $s$ such that $\mathrm{d}t = (1 + \alpha \xi + x^2) \mathrm{d}s$. This transformation converts the time-optimal control problem $\ref{3xi0}$ into the following linear problem.

\begin{equation}
	\begin{rcases}
	& \displaystyle {\bf{\min_{\xi_{\min} \leq \xi(t) \leq \xi_{\max}} S}} \\
	& \text{subject to:} \\
    & \dot{x}(s) = r x (1 - \frac{x}{\gamma}) (1 + x^2 + \alpha \xi) - x^2 y \\
    & \dot{y}(s) = g (x^2 + \xi) y - (1 + x^2 + \alpha \xi) (m y + \delta y^2) \\
	& (x(0),y(0)) = (x_0,y_0) \ \text{and} \ (x(S),y(S)) = (\bar{x},\bar{y}).
	\end{rcases}
	\label{3xi}
\end{equation}

Hamiltonian function for this problem (\ref{3xi}) is given by
\begin{equation*}
\begin{split}
    \mathbb{H}(s,x,y,p,q) &= p \Big[r x \Big(1 - \frac{x}{\gamma}\Big) (1 + x^2 + \alpha \xi) - x^2 y\Big] + q \Big[g (x^2 + \xi) y - (1 + x^2 + \alpha \xi) (m y + \delta y^2)\Big] \\
    &= \Big[ p r x \Big(1 - \frac{x}{\gamma}\Big) \alpha + q g y- q \alpha (m y + \delta y^2) \Big] \xi \\ &     + \Big[ p \Big(r x \Big(1 - \frac{x}{\gamma}\Big) (1 + x^2) - x^2 y \Big) + q (g x^2 y - (1 + x^2) (m y + \delta y^2))\Big] \\
\end{split}
\end{equation*}

Here, $p$ and $q$ are costate variables satisfying the adjoint equations 
\begin{equation*}
\begin{split}
\dot{p} & = -p \Big[2 r x^2 \Big(1 - \frac{x}{\gamma}\Big) - \frac{r x (1 + x^2 + \alpha \xi)}{\gamma} + r \Big(1 - \frac{x}{\gamma}\Big) (1 + x^2 + \alpha \xi) - 2 x y \Big] -  q \Big[2 g x y - 2 x (m y + \delta y^2)\Big] \\
\dot{q} & = p x^2 - q \Big[ g (x^2 + \xi) - (1 + x^2 + \alpha \xi) (m + 2 \delta y) \Big]
\end{split}
\end{equation*}

Since Hamiltonian is a linear function in $\xi$, the optimal control can be a combination of bang-bang and singular controls. Since we are minimizing the Hamiltonian, the optimal strategy is given by 

\begin{equation}
	\xi^*(t) =
	\begin{cases}
	    \xi_{\max}, &\text{ if } \frac{\partial \mathbb{H}}{\partial \xi} < 0\\
	    \xi_{\min}, &\text{ if } \frac{\partial \mathbb{H}}{\partial \xi} > 0
	\end{cases}
\end{equation}
where
\begin{equation}
    \frac{\partial \mathbb{H}}{\partial \xi} = \alpha p r x \Big(1 - \frac{x}{\gamma}\Big) + q (g y - \alpha (m y + \delta y^2)) 
\end{equation}

This problem \ref{3xi} admits a singular solution if there exists an interval $[s_1,s_2]$ on which $\frac{\partial \mathbb{H}}{\partial \xi} = 0$. Therefore, 

\begin{equation}
    \frac{\partial \mathbb{H}}{\partial \xi} = \alpha p r x \Big(1 - \frac{x}{\gamma}\Big) + q (g y - \alpha (m y + \delta y^2)) = 0 \textit{ i.e. } \frac{p}{q} = \frac{\gamma (g y - \alpha m y - \alpha \delta y^2)}{\alpha r x (-\gamma + x)} \label{3xpbyq1}
\end{equation}

Differentiating $\frac{\partial \mathbb{H}}{\partial \xi}$ with respect to $s$ we obtain 

\begin{equation*}
\begin{split}
    \frac{\mathrm{d}}{\mathrm{d}s} \frac{\partial \mathbb{H}}{\partial \xi} &= \frac{\mathrm{d}}{\mathrm{d}s} \Big[ \alpha p r x \Big(1 - \frac{x}{\gamma}\Big) + q \Big(g y - \alpha (m y + \delta y^2)\Big) \Big] \\
     &= \alpha p r \Big(1-\frac{2 x}{\gamma}\Big) \dot{x} + q (g -\alpha (m +\delta 2 y )) \dot{y} + \alpha r x \Big(1-\frac{x}{\gamma}\Big) \dot{p} + (g y-\alpha (m y+\delta y^2)) \dot{q}
\end{split}
\end{equation*}

Substituting the values of $\dot{x}, \dot{y}, \dot{p}, \dot{q}$ in the above equation, we obtain
\begin{equation*}
\begin{split}
    \frac{\mathrm{d}}{\mathrm{d}s} \frac{\partial \mathbb{H}}{\partial \xi} =& \alpha p r \Big(1 - \frac{2 x}{\gamma}\Big) \Big(r x \Big(1 - \frac{x}{\gamma}\Big) (1 + x^2 + \alpha \xi) - x^2 y \Big) \\ & + \Big(g y - \alpha (m y + \delta y^2)\Big) \Big(p x^2 - q \Big(g (x^2 + \xi) - (1 + x^2 + \alpha \xi) (m + 2 \delta y)\Big)\Big) \\ & +  \alpha r x \Big(1 - \frac{x}{\gamma}\Big) \Big[-p (2 r x^2 \Big(1 - \frac{x}{\gamma}\Big) - \frac{r x (1 + x^2 + \alpha \xi)}{\gamma} + r (1 - \frac{x}{\gamma}) (1 + x^2 + \alpha \xi) - 2 x y) \\ & - q (2 g x y - 2 x (m y + \delta y^2))\Big] \\ &  + q (g (g (x^2 + \xi) y - (1 + x^2 + \alpha \xi) (m y + \delta y^2)) - \alpha (m (g (x^2 + \xi) y - (1 + x^2 + \alpha \xi) (m y + \delta y^2)) \\ & + 2 \delta y (g (x^2 + \xi) y - (1 + x^2 + \alpha \xi) (m y + \delta y^2))))
\end{split}
\end{equation*}

Along the singular arc, $\frac{\mathrm{d}}{\mathrm{d}s} \frac{\partial \mathbb{H}}{\partial \xi} = 0$. This implies that 

\begin{equation*}
\begin{split}
     \alpha p r \Big(1 - \frac{2 x}{\gamma}\Big) \Big(r x \Big(1 - \frac{x}{\gamma}\Big) (1 + x^2 + \alpha \xi) - x^2 y \Big) & \\ + \Big(g y - \alpha (m y + \delta y^2)\Big) \Big(p x^2 - q \Big(g (x^2 + \xi) - (1 + x^2 + \alpha \xi) (m + 2 \delta y)\Big)\Big) & \\ +  \alpha r x \Big(1 - \frac{x}{\gamma}\Big) \Big[-p (2 r x^2 \Big(1 - \frac{x}{\gamma}\Big) - \frac{r x (1 + x^2 + \alpha \xi)}{\gamma} + r (1 - \frac{x}{\gamma}) (1 + x^2 + \alpha \xi) - 2 x y) &  \\ - q (2 g x y - 2 x (m y + \delta y^2))\Big] & \\  + q (g (g (x^2 + \xi) y - (1 + x^2 + \alpha \xi) (m y + \delta y^2)) - \alpha (m (g (x^2 + \xi) y - (1 + x^2 + \alpha \xi) (m y + \delta y^2)) & \\  + 2 \delta y (g (x^2 + \xi) y - (1 + x^2 + \alpha \xi) (m y + \delta y^2)))) & = 0
\end{split}
\end{equation*}

and that 
\begin{equation} \label{3xpbyq2}
    \frac{p}{q} = \frac{\gamma y (\delta g \gamma (1 + x^2) y + \alpha x^2 (2 r (\gamma - x) (m + \delta y) - g (2 \gamma r - 2 r x + \delta \gamma y)))}{x^2 (2 \alpha r^2 (\gamma - x)^2 x - \gamma^2 (g + \alpha (-m + r)) y + \alpha \delta \gamma^2 y^2)}
\end{equation}

From \ref{3xpbyq1} and \ref{3xpbyq2}, we have $\frac{x y (-g + \alpha (m + \delta y))}{\alpha r (\gamma - x)} + \frac{y (-\delta g \gamma (1 + x^2) y + \alpha x^2 (-2 r (\gamma - x) (m + \delta y) + g (2 \gamma r - 2 r x + \delta \gamma y)))}{2 \alpha r^2 (\gamma - x)^2 x - \gamma^2 (g + \alpha (-m + r)) y + \alpha \delta \gamma^2 y^2} = 0$. The solutions of this cubic equation gives the switching points of the bang-bang control.

\subsection{Applications to Pest Management}

In this subsection, we simulated the time-optimal control problems \ref{3alpha} and \ref{3xi} using python and driven the system to a prey-elimination state. Considering the pest as prey and the natural enemies as predators, the additional food provided to the predator will be the optimal control. For a chosen parameter values, we could show that the optimal control is of bang-bang control. This shows the importance of our work to the pest management by using biological control. 

The subplots in figure \ref{qual3}, \ref{quant3} depict the optimal state trajectories and optimal control trajectories for the time-optimal control problems \ref{3alpha} and \ref{3xi} respectively. These unconstrained optimization problems are solved using the BFGS algorithm by choosing the following parameters for the problems \ref{3alpha} and \ref{3xi}. $r=2.5,\ \gamma = 10,\ g=1.5,\ m=1,\ \delta = 0.01,\ \alpha = 12,\ \xi=0.7$.

\begin{figure} 
    \includegraphics[width=\textwidth]{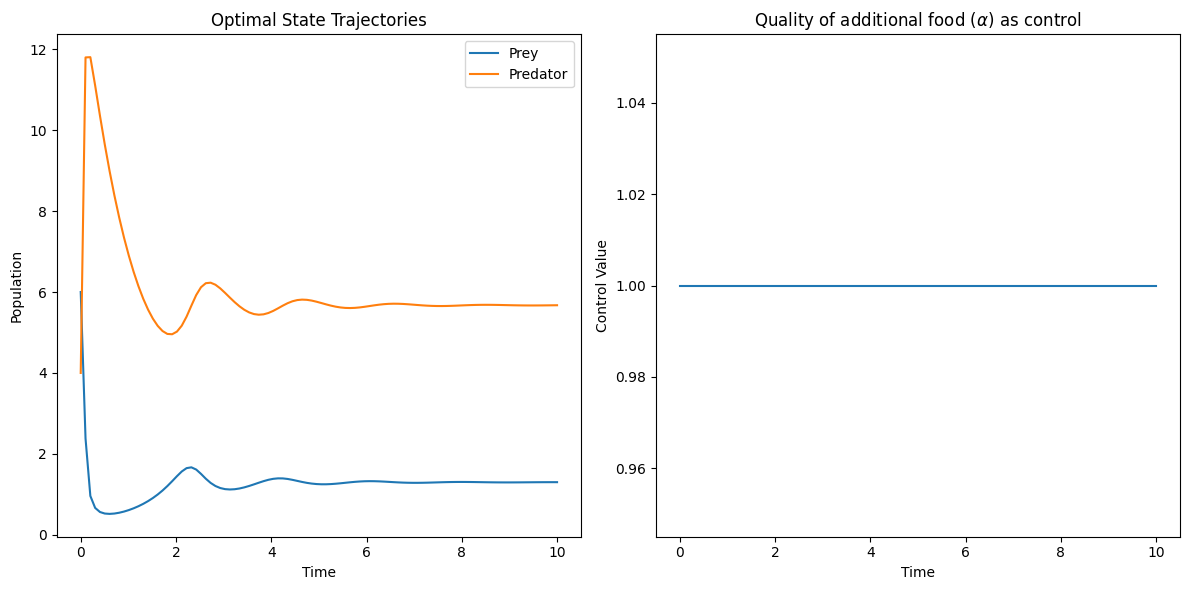}
    \caption{This figure depicts the optimal state trajectories and the optimal control trajectories for the system (\ref{3alpha}).}
    \label{qual3}
\end{figure}

\begin{figure} 
    \includegraphics[width=\textwidth]{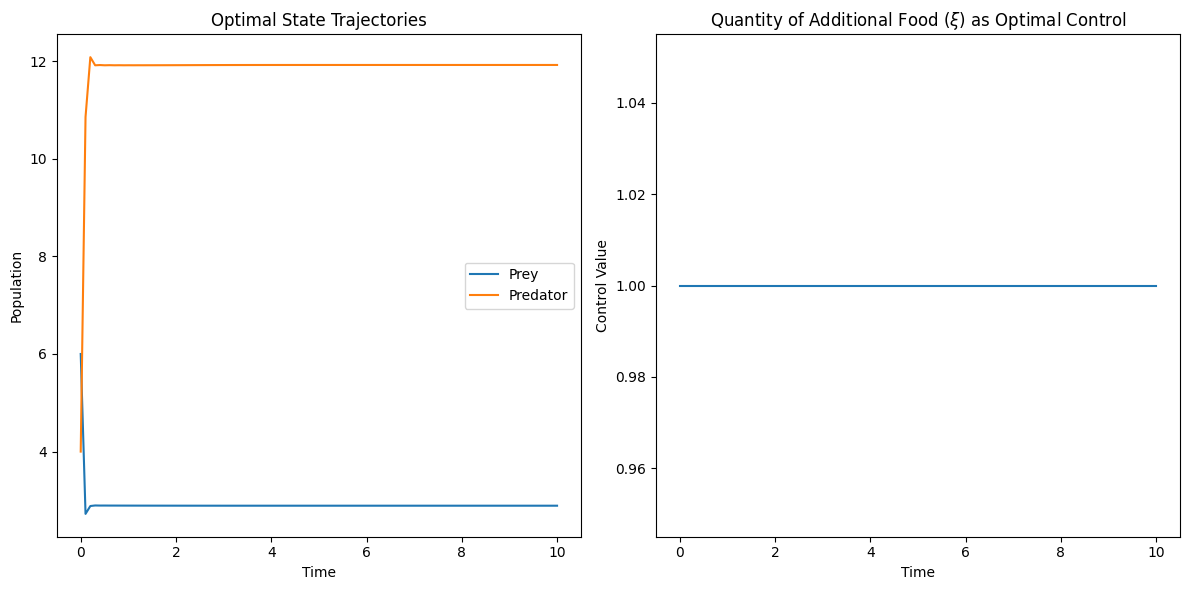}
    \caption{This figure depicts the optimal state trajectories and the optimal control trajectories for the system (\ref{3xi}).}
    \label{quant3}
\end{figure}

\newpage

\section{Model Formulation} \label{model2}

\hspace{0.1in}  In this section, we consider the following deterministic additional food provided prey-predator model exhibiting Holling type-IV functional response and intra-specific competition among predators. The model is given by: 

\begin{equation} \label{4det0}
\begin{split}
\frac{\mathrm{d} N(t)}{\mathrm{d} t} & = r N(t) \left( 1-\frac{N(t)}{K} \right)- \Bigg( \frac{c N(t)}{(\alpha \eta A + a)(b N^2(t) + 1) + N(t)}\Bigg) P(t) \\
\frac{\mathrm{d} P(t)}{\mathrm{d} t} & = e \Bigg( \frac{N(t) + \eta A (bN^2(t) + 1)}{(\alpha \eta A + a)(b N^2(t) + 1) + N(t)} \Bigg) P(t) - m_1 P(t) - \delta P(t)^2
\end{split}
\end{equation}

The biological meaning for all the parameters involved in the system (\ref{4det0}) are enlisted and described in Table \ref{param_tab2}. The detailed derivation of the functional response and the model can be found in \cite{v1}. In order to reduce the complexity, we now reduce the number of parameters in the model (\ref{4det0})  by introducing the transformations $N = a x,\  P=\frac{a y}{c}$. Then the system (\ref{4det0}) gets transformed to:

\begin{equation} \label{4detisc}
\begin{split}
\frac{\mathrm{d} x}{\mathrm{d} t} & = rx \Bigg(1-\frac{x}{\gamma} \Bigg)- \Bigg( \frac{xy}{(1+\alpha \xi)(\omega x^2 + 1) + x}\Bigg) \\
\frac{\mathrm{d} y}{\mathrm{d} t} & = e \Bigg( \frac{x + \xi (\omega x^2 + 1)}{(\alpha \xi+ 1)(\omega x^2 + 1) + x} \Bigg) y - m_1 y - m_2 y^2
\end{split}
\end{equation}
where $ \gamma = \frac{K}{a},\  \xi = \frac{\eta A}{a},\  \omega = b a^2 m_2 = \frac{c}{a \delta}$. 

\begin{table}[bht!]
    \centering
    \begin{tabular}{ccc}
        \hline
        Parameter & Definition & Dimension \\  
        \hline
        T & Time & time\\ 
        N & Prey density & biomass \\
        P & Predator density & biomass \\
        A & Additional food & biomass \\
        r & Prey intrinsic growth rate & time$^{-1}$ \\
        K & Prey carrying capacity & biomass \\
        c & Maximum rate of predation & time$^{-1}$ \\
        e & Maximum growth rate of predator & time$^{-1}$ \\
        m$_1$ & Predator mortality rate & time$^{-1}$ \\
        $\delta$ & Death rate of predators due to intra-specific competition & biomass$^{-1}$ time$^{-1}$ \\ 
        $\alpha$ & Quality of additional food for predators & Dimensionless \\
        b & Group defence in prey & biomass$^{-2}$ \\
        \hline
    \end{tabular}
    \caption{Description of variables and parameters present in the systems (\ref{4det0}), (\ref{4detisc})}
    \label{param_tab2}
\end{table}

\begin{thm}(Existence and Uniqueness of solutions of (\ref{4detisc}))\label{4th1}
The interior of the positive quadrant of the state space is invariant and all the solutions of the system (\ref{4detisc}) initiating in the interior of the positive quadrant are  asymptotically bounded in the region $B$ defined by \\ $ \textbf{B} = \left\{ (x,y) \in \mathbb{R}^{2}_+ : 0\leq x\leq \gamma ,\; 0\leq x + \frac{1}{\beta}y \leq M \right\}$,  where $M = \frac{\gamma (r + \eta)^2}{4 r \eta} + \frac{e (\frac{\xi}{1+\alpha \xi} + \frac{\eta - m_1}{e})^2}{4 m_2 \eta}$ and $\eta > 0$ sufficiently small.
\end{thm}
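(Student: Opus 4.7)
The plan is to mirror the proof of Theorem \ref{3th1} essentially step for step, with the Holling type-IV functional response replacing the type-III one and the conversion coefficient $e$ playing the role that $g$ played earlier. First I would define the auxiliary function $V = x + \frac{1}{e} y$ on the interior of the positive quadrant, pick an auxiliary parameter $\eta > 0$, and compute $\frac{dV}{dt} + \eta V$ by substituting the right-hand sides of (\ref{4detisc}).

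The decisive algebraic reduction is the cancellation inside the predator--prey interaction. Writing $D(x) = (1+\alpha \xi)(\omega x^2 + 1) + x$, the $-\frac{xy}{D(x)}$ contribution from the prey equation combines with $\frac{1}{e} \cdot e \cdot \frac{x + \xi(\omega x^2 + 1)}{D(x)} y$ from the scaled predator equation and leaves the single term $\frac{\xi(\omega x^2 + 1)}{D(x)} y$. Since $x \geq 0$, one has the uniform estimate
\begin{equation*}
\frac{\omega x^2 + 1}{(1+\alpha\xi)(\omega x^2 + 1) + x} \leq \frac{\omega x^2 + 1}{(1+\alpha\xi)(\omega x^2 + 1)} = \frac{1}{1+\alpha\xi},
\end{equation*}
so this term is dominated by $\frac{\xi}{1+\alpha\xi} y$, exactly the coefficient appearing in the definition of $M$.

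After these reductions the inequality takes the form
\begin{equation*}
\frac{dV}{dt} + \eta V \leq (r+\eta)\, x - \frac{r}{\gamma}\, x^2 + \left( \frac{\xi}{1+\alpha\xi} + \frac{\eta - m_1}{e} \right) y - \frac{m_2}{e}\, y^2 .
\end{equation*}
Choosing $\eta$ sufficiently small (so that $\frac{m_2}{e}$ dominates the linear $y$-coefficient in the relevant sense, paralleling the condition $\eta \ll \delta$ in Theorem \ref{3th1}) and applying the scalar inequality $A u - B u^2 \leq A^2/(4B)$ separately to the $x$- and $y$-quadratics yields a constant bound $\frac{dV}{dt} + \eta V \leq M'$, where $M' = \eta M$ with $M$ as in the statement. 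The proof then closes via the comparison theorem \cite{birkhoff1962ordinary} applied to the scalar ODEs $\dot{V}+\eta V = 0$ and $\dot{V}+\eta V = M'$, giving
\begin{equation*}
0 < V(t) < \frac{M'}{\eta}\bigl(1 - e^{-\eta t}\bigr) + V(0)\, e^{-\eta t},
\end{equation*}
hence $V(t) \leq M$ for large $t$, and the positive quadrant is invariant because each vector field component is non-negative on the respective axis.

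The only step that is meaningfully different from the Holling type-III case is the denominator bound in the second paragraph: the type-IV denominator $D(x)$ has the extra $\omega x^2$ piece in both the numerator and denominator of the residual predator-gain term, and the main obstacle is to verify that these cancel uniformly in $x$ so that the coefficient $\frac{\xi}{1+\alpha\xi}$ in the definition of $M$ is indeed valid for all admissible $x \geq 0$. Once that uniform estimate is in hand, the remaining work is purely the bookkeeping of constants and is identical in spirit to Theorem \ref{3th1}.
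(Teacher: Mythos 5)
Your proposal is correct and follows essentially the same route as the paper's own proof: the Lyapunov-type function $V = x + \frac{1}{e}y$, the cancellation of the interaction terms leaving $\frac{\xi(\omega x^2+1)}{(1+\alpha\xi)(\omega x^2+1)+x}\,y \leq \frac{\xi}{1+\alpha\xi}\,y$, the quadratic bound $Au - Bu^2 \leq \frac{A^2}{4B}$ with $M' = \eta M$, and the comparison theorem to conclude asymptotic boundedness in $\textbf{B}$. The only cosmetic difference is that you justify the uniform denominator estimate by dividing through by $\omega x^2 + 1$, whereas the paper writes the same quantity as $\frac{\xi y}{1+\alpha\xi+\frac{x}{\omega x^2+1}}$ and drops the nonnegative term $\frac{x}{\omega x^2+1}$; these are the same estimate.
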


\begin{proof}
    Let us define $V = x + \frac{1}{e}y$. Now for any $\eta >0$, consider the ordinary differential equation 
	    \begin{eqnarray*}
		    \dfrac{d V}{dt} + \eta V &=& rx \Bigg(1-\frac{x}{\gamma} \Bigg)- \Bigg( \frac{xy}{(1+\alpha \xi)(\omega x^2 + 1) + x}\Bigg) + \Bigg( \frac{x + \xi (\omega x^2 + 1)}{(\alpha \xi+ 1)(\omega x^2 + 1) + x} \Bigg) y \\ & & - \frac{m_1}{e} y - \frac{m_2}{e} y^2 + \eta x + \frac{\eta}{e} y \\
             &=& (r+\eta) x - \frac{r}{\gamma} x^2 + \frac{\xi y}{1 + \alpha \xi + \frac{x}{\omega x^2 + 1}} - \frac{m_1}{e} y + \frac{\eta}{e} y - \frac{m_2}{e} y^2 \\
             &<& (r+\eta) x - \frac{r}{\gamma} x^2 + \frac{\xi}{1 + \alpha \xi} y - \frac{m_1}{e} y + \frac{\eta}{e} y - \frac{m_2}{e} y^2 \\
             &=& (r+\eta) x - \frac{r}{\gamma} x^2 + (\frac{\xi}{1 + \alpha \xi} + \frac{\eta - m_1}{e}) y - \frac{m_2}{e} y^2 \\
        \end{eqnarray*}
        Since $A x - B x^2 \leq \frac{A^2}{4 B}$, by choosing $\eta$ sufficiently small ($\eta \ll \delta$), we get the relation

    $$\dfrac{d V}{dt} + \eta V \leq  \frac{\gamma (r + \eta)^2}{4 r} + \frac{e (\frac{\xi}{1+\alpha \xi} + \frac{\eta - m_1}{e})^2}{4 m_2}$$

    Now, by choosing $M' = \frac{\gamma (r + \eta)^2}{4 r} + \frac{e (\frac{\xi}{1+\alpha \xi} + \frac{\eta - m_1}{e})^2}{4 m_2}$, we get
    \begin{equation} \label{ineq3}
	    	\dfrac{d V}{dt} + \eta V \leq  M'
    \end{equation}

    Considering the differential equations $\dfrac{d V}{dt} + \eta V = 0$ and $\dfrac{d V}{dt} + \eta V = M'$ and  using the Comparison theorem \cite{birkhoff1962ordinary} for solutions of differential equations and  the inequality (\ref{ineq3}), we get the relation
    \begin{equation}
	    0 < V(t) < \frac{M'}{\eta} (1 - \exp(-\eta t)) + \omega(0) \exp(-\eta t) \label{ineq4}
    \end{equation}

    Now, from (\ref{ineq4}) we see that as $t$ becomes large enough, we get $0 \leq V(t) \leq \frac{M'}{\eta}$. Let $M = \frac{M'}{\eta}$. Then, $0 \leq V(t) \leq M$. Hence, the solutions  of the additional food provided system (\ref{4detisc}) with positive initial conditions $(x_0,y_0)$ will be within the set \textbf{B}.
\end{proof}

\section{Time-Optimal Control Studies for Holling type-IV Systems} \label{control2}

In this section, we formulate and characterise two time-optimal control problems with quality of additional food and quantity of additional food as control parameters respectively. We shall drive the system \ref{4detisc} from the initial state $(x_0,y_0)$ to the final state $(\bar{x},\bar{y})$ in minimum time.

\subsection{Quality of Additional Food as Control Parameter}

We assume that the quantity of additional food $(\xi)$ is constant and the quality of additional food varies in $[\alpha_{\text{min}},\alpha_{\text{max}}]$. The time-optimal control problem with additional food provided prey-predator system involving Holling type-IV functional response and intra-specific competition among predators (\ref{4detisc}) with quality of additional food ($\alpha$) as control parameter is given by

\begin{equation}
	\begin{rcases}
	& \displaystyle {\bf{\min_{\alpha_{\min} \leq \alpha(t) \leq \alpha_{\max}} T}} \\
	& \text{subject to:} \\
    & \frac{\mathrm{d} x}{\mathrm{d} t} = rx \Big(1-\frac{x}{\gamma} \Big)- \Bigg( \frac{xy}{(1+\alpha \xi)(\omega x^2 + 1) + x}\Bigg) \\
    & \frac{\mathrm{d} y}{\mathrm{d} t} = e \Bigg( \frac{x + \xi (\omega x^2 + 1)}{(\alpha \xi+ 1)(\omega x^2 + 1) + x} \Bigg) y - m_1 y - m_2 y^2 \\
	& (x(0),y(0)) = (x_0,y_0) \ \text{and} \ (x(T),y(T)) = (\bar{x},\bar{y}).
	\end{rcases}
	\label{4alpha0}
\end{equation}

This problem can be solved using a transformation on the independent variable $t$ by introducing an independent variable $s$ such that $\mathrm{d}t = ((1+\alpha \xi)(\omega x^2 + 1) + x) \mathrm{d}s$. This transformation converts the time-optimal control problem $\ref{4alpha0}$ into the following linear problem.

\begin{equation}
	\begin{rcases}
	& \displaystyle {\bf{\min_{\alpha_{\min} \leq \alpha(t) \leq \alpha_{\max}} S}} \\
	& \text{subject to:} \\
    & \dot{x}(s) = r x (1 - \frac{x}{\gamma}) (x + (1 + \omega x^2) (1 + \alpha \xi)) - x y \\
    & \dot{y}(s) = e (x + (1 + \omega x^2) \xi) y - (x + (1 + \omega x^2) (1 + \alpha \xi)) (m_1 y + m_2 y^2) \\
	& (x(0),y(0)) = (x_0,y_0) \ \text{and} \ (x(S),y(S)) = (\bar{x},\bar{y}).
	\end{rcases}
	\label{4alpha}
\end{equation}

Hamiltonian function for this problem (\ref{4alpha}) is given by
\begin{equation*}
\begin{split}
    \mathbb{H}(s,x,y,p,q) = & p (r x (1 - \frac{x}{\gamma}) (x + (1 + \omega x^2) (1 + \alpha \xi)) - x y) + q (e (x + (1 + \omega x^2) \xi) y \\ & - (x + (1 + \omega x^2) (1 + \alpha \xi)) (m_1 y + m_2 y^2)) \\
    = & \Big[ p r x (1 - \frac{x}{\gamma}) (1 + \omega x^2) \xi - q \xi (1 + \omega x^2) (m_1 y + m_2 y^2) \Big] \alpha \\ & + \Big[ p \Big(r x \Big(1 - \frac{x}{\gamma}\Big) (x + 1 + \omega x^2) - x y\Big) \\ & + q \Big(e (x + (1 + \omega x^2) \xi) y - (x + 1 + \omega x^2) (m_1 y + m_2 y^2)\Big) \Big] \\
\end{split}
\end{equation*}

Here, $p$ and $q$ are costate variables satisfying the adjoint equations 
\begin{equation*}
\begin{split}
\dot{p} & = -p \Big[2 r x^2 \Big(1 - \frac{x}{\gamma}\Big) - \frac{r}{\gamma} x (1 + x^2 + \alpha \xi) + r \Big( 1 - \frac{x}{\gamma} \Big) (1 + x^2 + \alpha \xi) - 2 x y \Big] -  q \Big[2 g x y - 2 x (m y + \delta y^2)\Big] \\
\dot{q} & = p x^2 - q \Big[ g (x^2 + \xi) - (1 + x^2 + \alpha \xi) (m + 2 \delta y) \Big]
\end{split}
\end{equation*}

Since Hamiltonian is a linear function in $\alpha$, the optimal control can be a combination of bang-bang and singular controls. Since we are minimizing the Hamiltonian, the optimal strategy is given by 

\begin{equation}
	\alpha^*(t) =
	\begin{cases}
	    \alpha_{\max}, &\text{ if } \frac{\partial \mathbb{H}}{\partial \alpha} < 0\\
	    \alpha_{\min}, &\text{ if } \frac{\partial \mathbb{H}}{\partial \alpha} > 0
	\end{cases}
\end{equation}
where
\begin{equation}
    \frac{\partial \mathbb{H}}{\partial \alpha} = p r x \Big(1 - \frac{x}{\gamma}\Big) \xi - q \xi (m y + \delta y^2)
\end{equation}

This problem \ref{4alpha} admits a singular solution if there exists an interval $[s_1,s_2]$ on which $\frac{\partial \mathbb{H}}{\partial \alpha} = 0$. Therefore, 

\begin{equation}
    \frac{\partial \mathbb{H}}{\partial \alpha} = p r x \Big(1 - \frac{x}{\gamma}\Big) \xi - q \xi (m y + \delta y^2) = 0 \textit{ i.e. } \frac{p}{q} = \frac{\gamma (m y + \delta y^2)}{r x (x -\gamma)} \label{4apbyq1}
\end{equation}

Differentiating $\frac{\partial \mathbb{H}}{\partial \alpha}$ with respect to $s$ we obtain 
\begin{equation*}
\begin{split}
    \frac{\mathrm{d}}{\mathrm{d}s} \frac{\partial \mathbb{H}}{\partial \alpha} = & \frac{\mathrm{d}}{\mathrm{d}s} \Big[ p r x \Big(1 - \frac{x}{\gamma}\Big) \xi - q \xi (m y + \delta y^2)\Big] \\
     = & r \xi x (1-\frac{x}{\gamma}) \dot{p} + p r \xi (1 - \frac{2 x}{\gamma}) \dot{x} - \xi (m y + \delta y^2) \dot{q} - q \xi (m +2 \delta y) \dot{y}
\end{split}
\end{equation*}

Substituting the values of $\dot{x}, \dot{y}, \dot{p}, \dot{q}$ in the above equation, we obtain
\begin{equation*}
\begin{split}
    \frac{\mathrm{d}}{\mathrm{d}s} \frac{\partial \mathbb{H}}{\partial \alpha} = & p r \Big( 1 - \frac{2 x}{\gamma} \Big) \xi \Big[r x \Big(1 - \frac{x}{\gamma}\Big) (1 + x^2 + \alpha \xi) - x^2 y\Big] \\ & - q \xi (m + 2 \delta y) \Big[ g (x^2 + \xi) y - (1 + x^2 + \alpha \xi) (m y + \delta y^2) \Big] \\ &  - \xi (m y + \delta y^2) \Big[ p x^2 - q (g (x^2 + \xi) - (1 + x^2 + \alpha \xi) (m + 2 \delta y)) \Big] \\ &  +  r x \Big(1 - \frac{x}{\gamma}\Big) \xi \Big[-p \Bigg(2 r x^2 \Big(1 - \frac{x}{\gamma} \Big) - \frac{r}{\gamma} x (1 + x^2 + \alpha \xi) + r \Big(1 - \frac{x}{\gamma}\Big) (1 + x^2 + \alpha \xi) - 2 x y\Bigg) \\ & - q (2 g x y - 2 x (m y + \delta y^2))\Big] 
\end{split}
\end{equation*}

Along the singular arc, $\frac{\mathrm{d}}{\mathrm{d}s} \frac{\partial \mathbb{H}}{\partial \alpha} = 0$. This implies that 

\begin{equation*}
\begin{split}
     p r \Big( 1 - \frac{2 x}{\gamma} \Big) \xi \Big[r x \Big(1 - \frac{x}{\gamma}\Big) (1 + x^2 + \alpha \xi) - x^2 y\Big] & \\ - q \xi (m + 2 \delta y) \Big[ g (x^2 + \xi) y - (1 + x^2 + \alpha \xi) (m y + \delta y^2) \Big] & \\   - \xi (m y + \delta y^2) \Big[ p x^2 - q (g (x^2 + \xi) - (1 + x^2 + \alpha \xi) (m + 2 \delta y)) \Big] & \\   +  r x \Big(1 - \frac{x}{\gamma}\Big) \xi \Big[-p \Bigg(2 r x^2 \Big(1 - \frac{x}{\gamma} \Big) - \frac{r}{\gamma} x (1 + x^2 + \alpha \xi) + r \Big(1 - \frac{x}{\gamma}\Big) (1 + x^2 + \alpha \xi) - 2 x y\Bigg) & \\ - q (2 g x y - 2 x (m y + \delta y^2))\Big] & = 0
\end{split}
\end{equation*}

and that 
\begin{equation} \label{4apbyq2}
    \frac{p}{q} = \frac{\gamma y (2 g r x^2 (-\gamma + x) - \delta g \gamma (x^2 + \xi) y + 2 r (\gamma - x) x^2 (m + \delta y))}{x^2 (2 r^2 (\gamma - x)^2 x + \gamma^2 (m - r) y + \delta \gamma^2 y^2)}
\end{equation}

From \ref{4apbyq1} and \ref{4apbyq2}, we have $\gamma y (\gamma^2 x y (m + \delta y) (m - r + \delta y) + g r (\gamma - x) (2 r (\gamma - x) x^2 + \delta \gamma (x^2 + \xi) y)) = 0$. The solutions of this cubic equation gives the switching points of the bang-bang control.

\subsection{Quantity of Additional Food as Control Parameter}

We assume that the quality of additional food $(\alpha)$ is constant and the quantity of additional food varies in $[\xi_{\text{min}},\xi_{\text{max}}]$. The time-optimal control problem with additional food provided prey-predator system involving Holling type-IV functional response and intra-specific competition among predators (\ref{4detisc}) with quantity of additional food ($\xi$) as control parameter is given by

\begin{equation}
	\begin{rcases}
	& \displaystyle {\bf{\min_{\xi_{\min} \leq \xi(t) \leq \xi_{\max}} T}} \\
	& \text{subject to:} \\
    & \dot{x}(t) = r x(t) \Big( 1 - \frac{x(t)}{\gamma} \Big) - \frac{x^2(t) y(t)}{1 + x^2(t) + \alpha \xi } \\
    & \dot{y}(t) = g y(t) \Big( \frac{x^2(t) + \xi}{1+x^2(t)+\alpha \xi} \Big) - m y(t) - \delta y^2(t) \\
	& (x(0),y(0)) = (x_0,y_0) \ \text{and} \ (x(T),y(T)) = (\bar{x},\bar{y}).
	\end{rcases}
	\label{4xi0}
\end{equation}

This problem can be solved using a transformation on the independent variable $t$ by introducing an independent variable $s$ such that $\mathrm{d}t = (1 + \alpha \xi + x^2) \mathrm{d}s$. This transformation converts the time-optimal control problem $\ref{4xi0}$ into the following linear problem.

\begin{equation}
	\begin{rcases}
	& \displaystyle {\bf{\min_{\xi_{\min} \leq \xi(t) \leq \xi_{\max}} S}} \\
	& \text{subject to:} \\
    & \dot{x}(s) = r x (1 - \frac{x}{\gamma}) (1 + x^2 + \alpha \xi) - x^2 y \\
    & \dot{y}(s) = g (x^2 + \xi) y - (1 + x^2 + \alpha \xi) (m y + \delta y^2) \\
	& (x(0),y(0)) = (x_0,y_0) \ \text{and} \ (x(S),y(S)) = (\bar{x},\bar{y}).
	\end{rcases}
	\label{4xi}
\end{equation}

Hamiltonian function for this problem (\ref{4xi}) is given by
\begin{equation*}
\begin{split}
    \mathbb{H}(s,x,y,p,q) &= p \Big[r x (1 - \frac{x}{\gamma}) (1 + x^2 + \alpha \xi) - x^2 y\Big] + q \Big[g (x^2 + \xi) y - (1 + x^2 + \alpha \xi) (m y + \delta y^2)\Big] \\
    &= \Big[ p r x (1 - \frac{x}{\gamma}) \xi - q \xi (m y + \delta y^2) \Big] \alpha \\ &     + \Big[ p (r x (1 - \frac{x}{\gamma}) (1 + x^2) - x^2 y) + q (g (x^2 + \xi) y - (1 + x^2) (m y + \delta y^2))\Big] \\
\end{split}
\end{equation*}

Here, $p$ and $q$ are costate variables satisfying the adjoint equations 

\begin{equation*}
\begin{split}
\dot{p} & = -p \Big[2 r x^2 (1 - \frac{x}{\gamma}) - \frac{r x (1 + x^2 + \alpha \xi)}{\gamma} + r (1 - \frac{x}{\gamma}) (1 + x^2 + \alpha \xi) - 2 x y \Big] -  q \Big[2 g x y - 2 x (m y + \delta y^2)\Big] \\
\dot{q} & = p x^2 - q \Big[ g (x^2 + \xi) - (1 + x^2 + \alpha \xi) (m + 2 \delta y) \Big]
\end{split}
\end{equation*}

Since Hamiltonian is a linear function in $\xi$, the optimal control can be a combination of bang-bang and singular controls. Since we are minimizing the Hamiltonian, the optimal strategy is given by 

\begin{equation}
	\xi^*(t) =
	\begin{cases}
	    \xi_{\max}, &\text{ if } \frac{\partial \mathbb{H}}{\partial \xi} < 0\\
	    \xi_{\min}, &\text{ if } \frac{\partial \mathbb{H}}{\partial \xi} > 0
	\end{cases}
\end{equation}
where
\begin{equation}
    \frac{\partial \mathbb{H}}{\partial \xi} = p r x (1 - \frac{x}{\gamma}) \xi - q \xi (m y + \delta y^2)
\end{equation}

This problem \ref{4xi} admits a singular solution if there exists an interval $[s_1,s_2]$ on which $\frac{\partial \mathbb{H}}{\partial \xi} = 0$. Therefore, 

\begin{equation}
    \frac{\partial \mathbb{H}}{\partial \xi} = p r x \Big(1 - \frac{x}{\gamma}\Big) \xi - q \xi (m y + \delta y^2) = 0 \textit{ i.e. } \frac{p}{q} = \frac{\gamma (m y + \delta y^2)}{r x (x -\gamma)} \label{4xpbyq1}
\end{equation}

Differentiating $\frac{\partial \mathbb{H}}{\partial \xi}$ with respect to $s$ we obtain 

\begin{equation*}
\begin{split}
    \frac{\mathrm{d}}{\mathrm{d}s} \frac{\partial \mathbb{H}}{\partial \xi} &=\frac{\mathrm{d}}{\mathrm{d}s} \Big[ p r x \Big(1 - \frac{x}{\gamma}\Big) \xi - q \xi (m y + \delta y^2)\Big] \\
    &= r \xi x (1-\frac{x}{\gamma}) \dot{p} + p r \xi (1 - \frac{2 x}{\gamma}) \dot{x} - \xi (m y + \delta y^2) \dot{q} - q \xi (m +2 \delta y) \dot{y}
\end{split}
\end{equation*}

Substituting the values of $\dot{x}, \dot{y}, \dot{p}, \dot{q}$ in the above equation, we obtain

\begin{equation*}
\begin{split}
    \frac{\mathrm{d}}{\mathrm{d}s} \frac{\partial \mathbb{H}}{\partial \xi} = & p r \Big(1 - \frac{2 x}{\gamma}\Big) \xi \Big(r x \Big(1 - \frac{x}{\gamma}\Big) (1 + x^2 + \alpha \xi) - x^2 y\Big) \\ &  - q \xi (m + 2 \delta y) \Big(g (x^2 + \xi) y - (1 + x^2 + \alpha \xi) (m y + \delta y^2)\Big) \\ & - \xi (m y + \delta y^2) \Big(p x^2 - q (g (x^2 + \xi) - (1 + x^2 + \alpha \xi) (m + 2 \delta y))\Big) \\ & +  r x \Big(1 - \frac{x}{\gamma}\Big) \xi \Bigg(-p \Big(2 r x^2 \Big(1 - \frac{x}{\gamma}\Big) - \frac{r x (1 + x^2 + \alpha \xi)}{\gamma} + r (1 - \frac{x}{\gamma}) (1 + x^2 + \alpha \xi) - 2 x y\Big) \\ & - q (2 g x y - 2 x (m y + \delta y^2))\Bigg) 
\end{split}
\end{equation*}

Along the singular arc, $\frac{\mathrm{d}}{\mathrm{d}s} \frac{\partial \mathbb{H}}{\partial \xi} = 0$. This implies that 

\begin{equation*}
\begin{split}
     p r \Big(1 - \frac{2 x}{\gamma}\Big) \xi \Big(r x \Big(1 - \frac{x}{\gamma}\Big) (1 + x^2 + \alpha \xi) - x^2 y\Big) - q \xi (m + 2 \delta y) \Big(g (x^2 + \xi) y - (1 + x^2 + \alpha \xi) (m y + \delta y^2)\Big) & \\  - \xi (m y + \delta y^2) \Big(p x^2 - q (g (x^2 + \xi) - (1 + x^2 + \alpha \xi) (m + 2 \delta y))\Big) & \\  +  r x \Big(1 - \frac{x}{\gamma}\Big) \xi \Bigg(-p \Big(2 r x^2 \Big(1 - \frac{x}{\gamma}\Big) - \frac{r x (1 + x^2 + \alpha \xi)}{\gamma} + r (1 - \frac{x}{\gamma}) (1 + x^2 + \alpha \xi) - 2 x y\Big) & \\ - q (2 g x y - 2 x (m y + \delta y^2))\Bigg) & = 0
\end{split}
\end{equation*}

and that 
\begin{equation} \label{4xpbyq2}
    \frac{p}{q} = \frac{\gamma y (2 g r x^2 (-\gamma + x) - \delta g \gamma (x^2 + \xi) y + 2 r (\gamma - x) x^2 (m + \delta y))}{x^2 (2 r^2 (\gamma - x)^2 x + \gamma^2 (m - r) y + \delta \gamma^2 y^2)}
\end{equation}

From \ref{4xpbyq1} and \ref{4xpbyq2}, we have $\gamma y (\gamma^2 x y (m + \delta y) (m - r + \delta y) + g r (\gamma - x) (2 r (\gamma - x) x^2 + \delta \gamma (x^2 + \xi) y)) = 0$. The solutions of this cubic equation gives the switching points of the bang-bang control.

\subsection{Applications to Pest Management}

In this subsection, we simulated the time-optimal control problems \ref{4alpha} and \ref{4xi} using python and driven the system to a prey-elimination state. Considering the pest as prey and the natural enemies as predators, the additional food provided to the predator will be the optimal control. For a chosen parameter values, we could show that the optimal control is of bang-bang control. This shows the importance of our work to the pest management by using biological control. 

The subplots in figure \ref{qual4}, \ref{quant4} depict the optimal state trajectories and optimal control trajectories for the time-optimal control problems \ref{4alpha} and \ref{4xi} respectively. These unconstrained optimization problems are solved using the BFGS algorithm by choosing the following parameters for the problems \ref{4alpha} and \ref{4xi}. $r = 2.5,\ \gamma = 5, \ \omega = 3, \ \xi = 4,\ e = 4,\ m_1 = 1,\ m_2 = 0.01$.

\begin{figure} 
    \includegraphics[width=\textwidth]{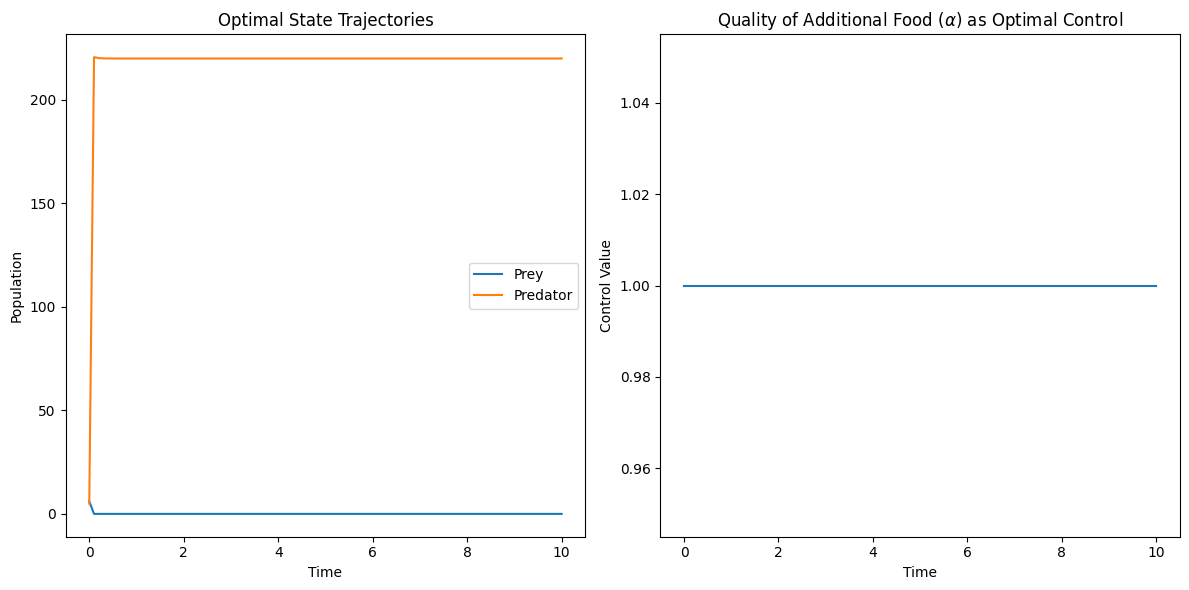}
    \caption{This figure depicts the optimal state trajectories and the optimal control trajectories for the system (\ref{4alpha}).}
    \label{qual4}
\end{figure}

\begin{figure} 
    \includegraphics[width=\textwidth]{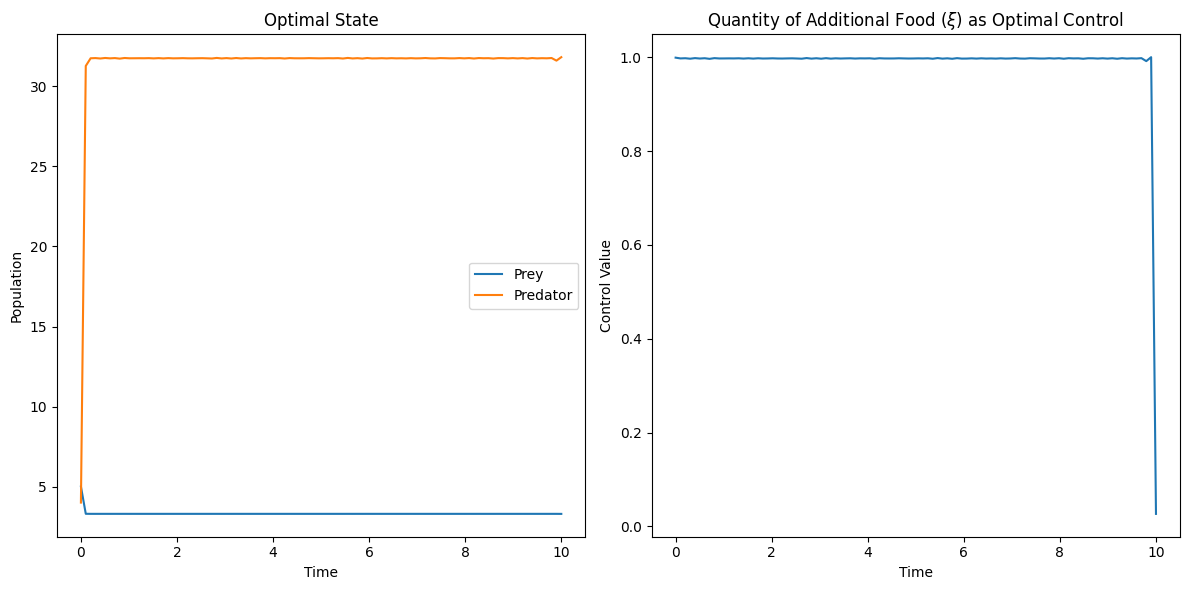}
    \caption{This figure depicts the optimal state trajectories and the optimal control trajectories for the system (\ref{4xi}).}
    \label{quant4}
\end{figure}

\newpage

\section{Discussions and Conclusions} \label{disc}

This paper studies two deterministic prey-predator systems exhibiting Holling type-III functional response and Holling type-IV functional responses respectively. We do the time-optimal control studies for these systems, with the quality and the quantity of additional food as control variables. To begin with, we formulated deterministic models by incorporating intra-specific competition among predators. In theorem \ref{3th1}, we proved the existence of a unique positive global solution of (\ref{3detisc}). In theorem \ref{4th1}, we proved the existence of a unique positive global solution of (\ref{4detisc}). Further, we formulated the time-optimal control problem with the objective to minimize the final time in which the system reaches the pre-defined state. Using the Pontraygin maximum principle, we characterized the optimal control values. We also numerically simulated the theoretical findings and applied them in the context of pest management.

Some of the salient features of this work include the following. This work captures the commonly observed intra-specific competition among predators implicitly in the context of models exhibiting Holling type-III and Holling type-IV functional responses. Also, this paper mainly deals with the novel study of the time-optimal control problems by transforming the independent variable in the control system. This work has been an initial attempt dealing with the Time Optimal Control studies for prey-predator systems involving intra-specific competition among predators. Since this is an initial exploratory research we didn't include finer specificalities such as sensitivity analysis and also did not elaborate much on the bifurcation aspect. In future we wish to incorporate and study  these aspects. 

\subsection*{Financial Support: }
This research was supported by National Board of Higher Mathematics(NBHM), Government of India(GoI) under project grant - {\bf{Time Optimal Control and Bifurcation Analysis of Coupled Nonlinear Dynamical Systems with Applications to Pest Management, \\ Sanction number: (02011/11/2021NBHM(R.P)/R$\&$D II/10074).}}

\subsection*{Conflict of Interests Statement: }
The authors have no conflicts of interest to disclose.

\subsection*{Ethics Statement:} 
This research did not required ethical approval.

\subsection*{Acknowledgments}
The authors dedicate this paper to the founder chancellor of SSSIHL, Bhagawan Sri Sathya Sai Baba. The corresponding author also dedicates this paper to his loving elder brother D. A. C. Prakash who still lives in his heart.

\printbibliography

\end{document}